\def\1{\mathbf{1}}
\theoremstyle{plain}
\newtheorem{theorem}{Theorem}
\newtheorem{lemma}{Lemma}
\newtheorem{corollary}{Corollary}
\theoremstyle{definition}
\theoremstyle{remark}
\def\leq{\leqslant}
\def\geq{\geqslant}
\DeclareMathOperator{\Mod}{mod}
\renewcommand{\bmod}[1]{\,(\Mod{ #1})}
\begin{document}

\title{The large $k$-term progression-free sets in $\mathbb{Z}_q^n$}
\thanks{This work is supported by the National Natural Science Foundation of
China (Grant No. 11271249 and No. 11671253).}
\author{Hongze Li}
\address{Department of Mathematics, Shanghai Jiao Tong University, Shanghai 200240, People's Republic of China}
\email{lihz@sjtu.edu.cn}

\maketitle

\begin{center}
\dedicatory{\textit{ \small{In memory of Professor Chengdong Pan} }}
\end{center}

\begin{abstract}
Let $k$ and $n$ be fixed positive integers. For each prime power $q\geqslant k\geqslant 3$, we show that any subset $A\subseteq \mathbb{Z}_q^n$ free of
$k$-term arithmetic progressions has size $|A|\leqslant c_k(q)^n$ with a constant $c_k(q)$ that can be expressed explicitly in terms of $k$ and $q$.
As a consequence, we can take $c_k(q)=0.8415q$ for sufficiently large $q$ and arbitrarily fixed $k\geq 3$.
\end{abstract}

\section{\bf Introduction}
\setcounter{lemma}{0}\setcounter{theorem}{0}\setcounter{corollary}{0}
\setcounter{equation}{0}

In his famous papers \cite {Rot52},\cite{Rot53}, Roth first considered the problem of finding upper bounds for the size of large subset of $\{1,2,...,N\}$ with no three-term arithmetic progression, and gave the first nontrivial upper bound. Since then, this problem has received considerable attentions by number theorists. Let $r_3(N)$ denote the maximal size of a subset of $\{1,2,...,N\}$ with no three-term arithmetic progression. Roth indeed proved $r_3(N)=O(N/\log\log N)$. This was subsequently improved and enhanced by
Heath-Brown \cite{HB87}, Szemer\'{e}di \cite{Sze90}, Bourgain \cite{Bou99}, Sanders \cite{San11}, \cite{San12}, and Bloom \cite{Blo16}. The best result so far is $r_3(N)=O(N(\log\log N)^4/\log N)$, due to Bloom.

For an (additively written) abelian group $G$, we say that a subset $A$ of $G$ is $k$-term \textit{progression-free} if there do not exist $a_1,\,a_2,\ldots,a_k\in A$ such that $a_k-a_{k-1}=a_{k-1}-a_{k-2}=\ldots= a_2-a_1\neq 0$, and denote by $r_k(G)$ the maximal size of $k$-term progression-free subsets of $G$.

In \cite{BB82},  Brown and Buhler first proved that $r_3(\mathbb{Z}_3^n)=o(3^n)$, and this was quantified by Meshulam \cite{Mes95} to $r_3(\mathbb{Z}_3^n)=O(3^n/n)$.  In their ground-breaking paper, Bateman and Katz \cite{BK12} proved that $r_3(\mathbb{Z}_3^n)=O(3^n/n^{1+\eta})$ with some positive constant $\eta>0$. The best known upper bound, $o(2.756^n)$, is due to Ellenberg and Gijswijt \cite{EG17}. Especially, they proved that, for any prime $p\geqslant 3$, there exists a positive constant $c=c(p)<p$ such that $r_3(\mathbb{Z}_p^n) = o(c^n)$. For the upper bound of $r_3(\mathbb{Z}_4^n)$, Sanders \cite{San09} proved that $r_3(\mathbb{Z}_4^n)=O(4^n/n(\log n)^{\eta})$ with an absolute constant $\eta>0$. Quite recently, Croot, Lev and Pach \cite{CLP17} developed the polynomial method and drastically improved the above upper bound to $r_3(\mathbb{Z}_4^n)\leqslant 4^{0.926n}$ in their breakthrough paper.

For each positive integer $m$, define
\begin{align}\label{eq:A(m)}
\mathfrak{A}(m)=\min_{x\in (0,1)}\frac{(1-x^m)}{m(1-x)x^{\frac{m-1}{3}}}
.\end{align}

In this paper, we introduce a formal polynomial method and establish the following upper bound of $r_k(\mathbb{Z}_{p^{\alpha}}^n)$ for $p\geq 2$ and $k\geq 3$.

\begin{theorem}\label{mt}
For any prime powers $q=p^{\alpha}\geqslant k\geqslant 3$ and $n\geqslant 1$, we have
\[r_k(\mathbb{Z}_q^n)\leqslant \Big(q\cdot \mathfrak{A}\Big(\frac{q}{(L_k,q)}\Big)\Big)^n,\]
where $\mathfrak{A}(\cdot)$ is given by $\eqref{eq:A(m)}$ and $L_k$ denotes the l.c.m. of $2,3,\ldots,k-1$.
\end{theorem}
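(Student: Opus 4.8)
The plan is to adapt the Croot–Lev–Pach / Ellenberg–Gijswijt polynomial method to $k$-term progressions in $\mathbb{Z}_q^n$ with $q=p^\alpha$. The key observation is that a $k$-term progression $a_1,a_2,\ldots,a_k$ with common difference $d$ can be encoded by the three points $a_1$, $a_{j}$, $a_k$ for a suitably chosen middle index; more precisely, writing $a_i = a_1 + (i-1)d$, any three terms $a_{i}, a_{j}, a_{\ell}$ with $i+\ell = 2j$ lie in arithmetic progression, and conversely three collinear points of the right spacing extend (over $\mathbb{Z}_q$) to a genuine $k$-term progression provided the relevant small integers are units — this is where the factor $(L_k,q)$ enters, since the step ratios involve the integers $2,3,\ldots,k-1$. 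First I would fix the precise linear relation: choose $j$ near the middle so that $a_1, a_j, a_k$ satisfy $\lambda a_1 + \mu a_j + \nu a_k = 0$ with $\lambda+\mu+\nu=0$, and check that a nontrivial such configuration in a progression-free set forces $a_1=a_j=a_k$.

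Next I would set up the counting. Let $A\subseteq\mathbb{Z}_q^n$ be $k$-term progression-free of size $N=|A|$. Working in the polynomial ring $R = \mathbb{Z}[x_1,\ldots,x_n]/(x_i^q - x_i \text{-type relations})$ — more carefully, the "formal" polynomial ring the author alludes to, graded by a weight that makes the exponent of each $x_i$ range over $0,1,\ldots,q-1$ — consider the "slack" polynomial $P(\mathbf{x},\mathbf{y},\mathbf{z})$ vanishing on all triples $(a,b,c)\in A^3$ except possibly on the diagonal $a=b=c$. The standard dimension argument: the space of polynomials of bounded degree $D$ restricted to $A^3$ minus the diagonal has dimension controlled by $\binom{\text{monomials of degree} \le D/2}{}$ on each factor, and by a rank/triangularity argument the number of diagonal points one can force is at most $3$ times the number of such low-degree monomials in one block of variables. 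The number of monomials $x_1^{e_1}\cdots x_n^{e_n}$ with $0\le e_i\le q-1$ and $\sum e_i \le t$ is, by a standard saddle-point estimate, at most $(q\cdot c(t/n))^n$ where $c$ is exactly the function being optimized; optimizing the threshold $t$ (equivalently the parameter $x$ in $\mathfrak{A}$) and plugging in the correct exponent $\tfrac{m-1}{3}$ with $m = q/(L_k,q)$ yields the stated bound $r_k(\mathbb{Z}_q^n)\le (q\cdot\mathfrak{A}(q/(L_k,q)))^n$.

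I would organize the argument as: (1) a combinatorial lemma translating "$k$-term progression-free" into a three-variable vanishing condition with the arithmetic side-condition involving $(L_k,q)$; (2) the Croot–Lev–Pach rank lemma, stating that a polynomial of degree $\le D$ in $3n$ variables that vanishes off the diagonal of $A^3$ has "diagonal rank" at most $3 m_D$, where $m_D$ counts admissible low-degree monomials; (3) the construction of a nonzero such polynomial from a dimension count when $N > 3 m_D$, giving a contradiction with an appropriate Combinatorial Nullstellensatz-type nonvanishing input; (4) the asymptotic evaluation of $m_D$ via the generating function $\prod (1 + x + \cdots + x^{q-1})$ and Markov's inequality / Chernoff bound, producing $\mathfrak{A}$; (5) substituting the optimal degree. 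The main obstacle, I expect, is step (1) in the prime-power (non-prime) case: over $\mathbb{Z}_{p^\alpha}$ the ring $\mathbb{Z}_q[x]/(x^q-x)$ is not the full function ring and the "formal polynomial" framework must be carefully defined so that (a) functions on $\mathbb{Z}_q^n$ are still represented by bounded-exponent polynomials, (b) the CLP rank bound survives, and (c) the progression-extension step genuinely needs only $(L_k,q)=1$-type invertibility of the intermediate coefficients rather than invertibility of all of $\mathbb{Z}_q$. Handling the $\gcd$ correctly — so that when $(L_k,q)>1$ one still extracts a usable (shorter-modulus) progression condition and the exponent becomes $\tfrac{m-1}{3}$ with $m=q/(L_k,q)$ — is the delicate point; everything else is a routine, if careful, adaptation of Ellenberg–Gijswijt.
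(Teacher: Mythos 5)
Your step (1) has a genuine gap. You propose to encode a $k$-term progression by a single linear relation $\lambda a_1+\mu a_j+\nu a_k=0$ among three of its members and to argue that such a configuration in a progression-free set forces $a_1=a_j=a_k$. That is false for $k>3$: a $k$-AP-free set may contain an abundance of triples $(a_1,a_j,a_k)$ in that linear relation, because nothing forces the \emph{intermediate} terms $a_2,\dots,a_{j-1},a_{j+1},\dots,a_{k-1}$ to lie in $A$. So the three-variable ``vanishing off the diagonal of $A^3$'' hypothesis that the slice-rank lemma of step (2) requires cannot be established, and the proof does not close.

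The paper resolves this with a genuinely different combinatorial setup, working in \emph{two} variables rather than three. For distinct $a,b\in A$ (thought of as $a_2,a_1$), the terms $a_3,\dots,a_k$ of the would-be progression are $(r-1)a-(r-2)b$ for $r=3,\dots,k$, and $k$-AP-freeness (after a non-degeneracy reduction) is captured by the \emph{product} condition $P(2a-b)P(3a-2b)\cdots P((k-1)a-(k-2)b)=0$ for all distinct $a,b\in A$, where $P$ is the indicator-type polynomial supported on a large chunk of $A$. One then bounds the rank of the ordinary $|A|\times|A|$ matrix with entries $B_{ij}=\prod_{r=2}^{k-1}P(ra_i-(r-1)a_j)$ by splitting each $P$ (of ``degree'' at most $2\alpha$) into a sum of two pieces each containing a factor of degree at most $\alpha$, then distributing the product; this gives $\mathrm{rank}\,B\leqslant 2^{k-2}|\mathfrak{M}_{\alpha,q}|$. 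Since $B$ is diagonal by hypothesis, one extracts a zero diagonal entry when $|A|$ is too large. This is closer to the original Croot--Lev--Pach low-degree-times-anything decomposition than to Tao's slice rank; the $2^{k-2}$ factor is the price for the $(k-2)$-fold product.

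Two further discrepancies worth flagging. First, the $(L_k,q)$ factor is not handled by an ``invertibility of small coefficients'' side condition as you suggest, but by an honest coset decomposition: one passes to cosets of $F=(L_k,q)\mathbb{Z}_q^n\cong\mathbb{Z}_{q/(L_k,q)}^n$, pulls back $A\cap(r_j+F)$ to a subset $B$ of $\mathbb{Z}_{q/(L_k,q)}^n$ which inherits $k$-AP-freeness \emph{and} the non-degeneracy condition $r(a-b)\neq 0$ for $1\leqslant r\leqslant k-1$, and applies the rank lemma there; this is exactly why the argument of $\mathfrak{A}$ is $q/(L_k,q)$. Second, the ``formal polynomial'' ring is not $\mathbb{Z}[x]/(x^q-x)$: the paper works over $\mathbb{Z}_p$ with generators $Y_i$ of order $q=p^\alpha$, sets $X_i=Y_i-1$, and uses the characteristic-$p$ identity $(X_i+1)^{p^\alpha}=X_i^{p^\alpha}+1$ to get $X_i^q=0$, so monomial exponents live in $\{0,\dots,q-1\}$. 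Finally, the multiplicative constant ($2^{k-2}+1$ in the paper, $3$ in your slice-rank sketch) is removed by the tensor power trick ($A^{\times v}\subseteq\mathbb{Z}_q^{vn}$ is still $k$-AP-free, take $v$-th roots and let $v\to\infty$), which your outline omits. Your step (4), the Chernoff/saddle-point evaluation of the monomial count giving $\mathfrak{A}$, does match the paper's Lemma on $|\mathfrak{M}_{1/3,q}|$.
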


\begin{corollary}\label{coro}
~

\begin{enumerate}
\item
For $k\geq 3$ and large $q$, $r_k(\mathbb{Z}_q^n) \leqslant (0.8415q)^n$.
\item
For $k\geq 3$ and each $q>(L_k,q)$, $r_k(\mathbb{Z}_q^n) \leqslant (0.945q)^n$.
\end{enumerate}

\end{corollary}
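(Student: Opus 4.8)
The plan is to feed Theorem~\ref{mt} a single one-parameter test point and then to study the resulting elementary function of $m$. Fix $k$; then $L_k=\mathrm{lcm}(2,\dots,k-1)$ is a fixed constant, so with $m=m(q):=q/(L_k,q)$ we have: $m$ is a positive integer (since $(L_k,q)\mid q$), $m\ge q/L_k$, and the hypothesis $q>(L_k,q)$ is equivalent to $m\ge 2$. Writing
\[
F_m(x)=\frac{1-x^m}{m\,(1-x)\,x^{(m-1)/3}}=\frac1m\sum_{j=0}^{m-1}x^{\,j-(m-1)/3}\qquad(0<x<1),
\]
we have $\mathfrak A(m)=\min_{0<x<1}F_m(x)$, and Theorem~\ref{mt} reads $r_k(\mathbb Z_q^n)\le\bigl(q\,\mathfrak A(m)\bigr)^n$. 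So it suffices to bound $\mathfrak A(m)$: uniformly over integers $m\ge 2$ for part~(2), and for $m\to\infty$ for part~(1).

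The device is the substitution $x=e^{-u/m}$ with a fixed parameter $u>0$. Since $x^m=e^{-u}$ and $x^{(m-1)/3}=e^{-u(m-1)/(3m)}$,
\[
\mathfrak A(m)\ \le\ F_m\!\bigl(e^{-u/m}\bigr)\ =\ \frac{(1-e^{-u})\,e^{\,u(m-1)/(3m)}}{m\,(1-e^{-u/m})}\ =:\ g_u(m).
\]
A short computation gives $\dfrac{d}{dm}\log g_u(m)=\dfrac1{m^{2}}\Bigl(\dfrac u3-m+\dfrac{u}{e^{u/m}-1}\Bigr)$; using $e^{t}>1+t+\tfrac12t^{2}$ for $t>0$ one gets $\dfrac{u}{e^{u/m}-1}<\dfrac{2m^{2}}{2m+u}$, so the parenthesis is $<\dfrac{u(u-m)}{3(2m+u)}<0$ once $m>u$. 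Hence $g_u$ is strictly decreasing on $(u,\infty)$, and since $m(1-e^{-u/m})\to u$ and $e^{u(m-1)/(3m)}\to e^{u/3}$ as $m\to\infty$,
\[
g_u(m)\ \longrightarrow\ \frac{(1-e^{-u})\,e^{u/3}}{u}\ =\ \frac{e^{u/3}-e^{-2u/3}}{u}\ =:\ h(u).
\]

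Now take $u=2.15$. For part~(1): granting $h(2.15)<0.8415$, there is an absolute $m_0$ with $g_{2.15}(m)<0.8415$ for all $m\ge m_0$; then for $q\ge m_0L_k$ we get $m=q/(L_k,q)\ge q/L_k\ge m_0$, so $\mathfrak A(m)\le g_{2.15}(m)<0.8415$, and Theorem~\ref{mt} gives $r_k(\mathbb Z_q^n)\le(0.8415\,q)^n$. For part~(2): if $m\ge3$ then $m>2.15$, so by the monotonicity of $g_{2.15}$ on $(2.15,\infty)$,
\[
\mathfrak A(m)\ \le\ g_{2.15}(m)\ \le\ g_{2.15}(3)\ =\ F_3\!\bigl(e^{-2.15/3}\bigr)\ <\ 0.929\ <\ 0.945,
\]
while if $m=2$ then $F_2(x)=\tfrac12\bigl(x^{-1/3}+x^{2/3}\bigr)$ is convex on $(0,1)$ with minimum at the root $x=\tfrac12$ of $x^{-4/3}=2x^{-1/3}$, so $\mathfrak A(2)=F_2(\tfrac12)=3\cdot2^{-5/3}=0.94494\ldots<0.945$. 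Either way $\mathfrak A(m)<0.945$, and Theorem~\ref{mt} gives $r_k(\mathbb Z_q^n)\le(0.945\,q)^n$ whenever $q>(L_k,q)$.

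The one genuinely delicate point is the numerical inequality $h(2.15)<0.8415$. Indeed $\min_{u>0}h(u)$ is attained at the unique root $u^{*}\approx2.1491$ of $(3-u)e^{u}=3+2u$, and substituting $e^{u^{*}}=(3+2u^{*})/(3-u^{*})$ into $h$ yields the closed form $\min_{u>0}h(u)=3\big/\!\bigl((3-u^{*})^{1/3}(3+2u^{*})^{2/3}\bigr)\approx0.84144$ --- which is precisely why $0.8415$ is the right constant, and which leaves a margin of only about $6\cdot10^{-5}$. So this step must be done with explicit estimates, e.g. $e^{43/60}<2.04762$ and $e^{-43/30}>0.23851$, whence $e^{43/60}-e^{-43/30}<1.80911<2.15\cdot0.8415$. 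Everything else --- the derivative identity for $g_u$, its sign and its limit, and the evaluations at $m=2$ and $m=3$ --- is routine; in particular no monotonicity statement for $m\mapsto\mathfrak A(m)$ itself is needed (though one could alternatively prove $\mathfrak A$ is decreasing and then deduce part~(2) from $\mathfrak A(m)\le\mathfrak A(2)$).
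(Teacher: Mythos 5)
Your approach is the same in spirit as the paper's --- apply Theorem~\ref{mt} and upper-bound $\mathfrak{A}$ by evaluating the quantity in \eqref{eq:A(m)} at a test point $x$ close to $1$ --- but your execution is tidier. The exponential parameterization $x=e^{-u/m}$, together with the verified monotonicity of $g_u$ on $(u,\infty)$ (the derivative formula and the use of $e^t>1+t+\tfrac12t^2$ both check out), lets you dispose of every integer $m\geqslant 3$ from the single evaluation $g_{2.15}(3)<0.929$, leaving only the exact computation $\mathfrak{A}(2)=3\cdot2^{-5/3}$. The paper instead uses $x=1-\beta/N$ with $\beta=1.6$ for $N\geqslant13$ and a table of eight numerical bounds for the prime powers $N<13$; your way avoids the case analysis entirely. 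The delicate inequality $h(2.15)<0.8415$ is handled honestly with explicit estimates; the paper uses $\alpha=2.148$ and the same limiting form $(e^{\alpha/3}-e^{-2\alpha/3})/\alpha$.

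There is, however, one genuine gap. You apply Theorem~\ref{mt} directly to $q$, reading off $r_k(\mathbb{Z}_q^n)\leqslant\bigl(q\,\mathfrak{A}(q/(L_k,q))\bigr)^n$. But Theorem~\ref{mt} is stated and proved only for prime powers $q=p^\alpha$ (the whole formal-polynomial setup lives over $\mathbb{Z}_p$ and uses $X_i^q=0$), whereas the Corollary is meant for arbitrary $q$ --- the paper's proof opens with ``Here $q$ is not necessary to be a prime power.'' The paper bridges this by first passing to a prime-power divisor $N\mid q$ via the elementary observation $r_k(\mathbb{Z}_q^n)\leqslant(q/N)^n\,r_k(\mathbb{Z}_N^n)$ (partition $\mathbb{Z}_q^n$ into $(q/N)^n$ cosets of $\tfrac{q}{N}\mathbb{Z}_q^n\cong\mathbb{Z}_N^n$, each of which meets a progression-free set in at most $r_k(\mathbb{Z}_N^n)$ points), choosing $N$ to be the largest prime-power factor of $q$ for part (1) and a prime power exactly dividing $q/(L_k,q)$ for part (2). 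Your argument is complete as written only for prime power $q$; to cover the general case you should insert this coset reduction and then run your $\mathfrak{A}$-estimate on the resulting prime-power modulus. With that addition the proof goes through, and is arguably cleaner than the paper's in the numerical part.
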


\subsection*{Notation}
Throughout this paper, $p$ with or without subscript, is always reserved for primes.
Denote by $(a,b)$ the greatest
common divisor of $a$ and $b$, $L_t:=[2,3,...,t-1]$ the least common multiple of $2,3,\ldots,t-1$.
For a set $S$, denote by $|S|$ the cardinality of $S$, and define
$
m S:=\{m s:\,s\in S\}.
$

\subsection*{Acknowledgement} The author is very grateful to Ping Xi for his valuable suggestions and comments.

\section{\bf Some Lemmas}
\setcounter{lemma}{0}\setcounter{theorem}{0}\setcounter{corollary}{0}
\setcounter{equation}{0}

Throughout this section, we fix $n\geqslant 1$ and $q=p^\alpha\geqslant k\geqslant 3$.

Given a positive integer $m$, the unknown $Y$ is said to be a generator of order $m$, if $Y^0=Y^m=1$ and $Y^j\neq 1$ for $1\leqslant j\leqslant m-1$.
For $1\leqslant i\leqslant n$, let $Y_{i}$ be generators of order $q$, then
$Y_{i}^q=1,$
and
$$
\prod_{i=1}^nY_{i}^{\lambda_{i}}=1\text{ if and only if }\lambda_{i}\equiv 0\bmod{q}\text{ for each }1\leqslant i\leqslant n.
$$

For $1\leq i\leq n$, put $X_i=Y_i-1$.
Let $\mathbb{Z}_p[X_1,\ldots,X_n]$ denote the linear space spanned by monomials $\{X_1^{\lambda_1}\cdots X_n^{\lambda_n}:0\leqslant\lambda_i\in \mathbb{Z}\}$ with coefficients over
$\mathbb{Z}_p$. $F[X_1,\ldots,X_n]=0$ means all coefficients of $F[X_1,\ldots,X_n]$ is $0$ over $\mathbb{Z}_p$.
We thus have $Y_i^q=(X_i+1)^q=X_i^q+1,$ which gives $X_i^q=0$ since $Y_i$
is of order $q$. Hence it is reasonable to assume that the terms of $X_1^{\lambda_1}\cdots X_n^{\lambda_n}$ vanish if $\lambda_i\geqslant q$
for some $1\leqslant i\leqslant n.$

For $0<\alpha<1/2$, define
\begin{align}\label{eq:setM}
\mathfrak{M}_{\alpha,\,q}:=\left\{(\lambda_{1},\lambda_{2},\ldots,\lambda_{n})\in[0, q)^n\cap\mathbb{Z}^n:\displaystyle\sum\limits_{i=1}^n\frac{\lambda_{i}}{q-1}\leqslant  \alpha n\right\}\end{align}
and $\overline{\mathfrak{M}}_{\alpha,\,q}=([0, q)^n\cap\mathbb{Z}^n)\setminus\mathfrak{M}_{\alpha,\,q}$ denotes the complementary set.
It is clear that $|\overline{\mathfrak{M}}_{\alpha,\,q}|=|\mathfrak{M}_{1-\alpha,\,q}|.$

For $c=(c_1,\ldots,c_n)\in \mathbb{Z}_q^n$,
and $\mathbf{\mathbf{X}}=(X_{1},\ldots,X_{n})$, define
$$
\mathbf{\mathbf{X}}^c:=\prod_{i=1}^nX_{i}^{c_{i}}.
$$
For $a,b\in \mathbb{Z}_q^n$, one can define $a_{i},b_{i},\mathbf{\mathbf{X}}^a,\mathbf{\mathbf{X}}^b$ accordingly.

For a set $B\subseteq \mathbb{Z}_q^n$, let $V_B$ denote the sub-space spanned by $\{\mathbf{X}^a: a\in B\}$
over $\mathbb{Z}_p$. Then $\dim V_B=|B|.$ When $B=\mathbb{Z}_q^n$ is the whole space, we write $V_{\mathbb{Z}_q^n}=V$ and thus $\dim V=q^n.$ For each $f\in V$, we may write
\[f=\sum_{a\in \mathbb{Z}_q^n}f(a)\mathbf{X}^a\]
with coefficients $f(a),a\in\mathbb{Z}_q^n.$

\begin{lemma}\label{lm:basiclemma}
Suppose $k\geq 3$ and $0<\alpha<1/2$.
Let $A$ be a subset of $\mathbb{Z}_q^n$ satisfies $ra\neq rb$ for $a\neq b\in A$ with $1\leq r\leq k-1$.
Suppose $P\in V_{\mathfrak{M}_{2\alpha,q}}$
satisfies $P(2a-b)P(3a-2b)\cdots P((k-1)a-(k-2)b)=0$ for every pair $a,\,b$ of distinct elements in $A$. Then there exists an element $c\in A$ such that $P(c)=0$ when $|A|>2^{k-2}\,|\mathfrak{M}_{\alpha,q}|.$
\end{lemma}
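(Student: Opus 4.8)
The plan is to run the Croot–Lev–Pach / Ellenberg–Gijswijt "slice rank" style dimension argument in this formal polynomial setting, exploiting the product structure of the $k-2$ "consequences" $2a-b,\,3a-2b,\dots,(k-1)a-(k-2)b$. First I would argue by contradiction: assume $P(c)\neq 0$ for every $c\in A$. The key observation is that each linear form $ja-(j-1)b$ (for $2\le j\le k-1$) is an affine combination of $a$ and $b$, so the monomial $\mathbf{X}^{ja-(j-1)b}$, after reducing exponents modulo $q$ and killing terms with some exponent $\ge q$, expands as a $\mathbb{Z}_p$-linear combination of products $\mathbf{X}^{u}\cdot\mathbf{X}^{v}$ where $u$ depends only on $a$ and $v$ only on $b$; crucially, the "weight" $\sum_i u_i/(q-1)$ plus $\sum_i v_i/(q-1)$ is bounded by the weight of $ja-(j-1)b$, which is at most $2\alpha n$ since $P\in V_{\mathfrak{M}_{2\alpha,q}}$. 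Hence in each factor, either $u\in\mathfrak{M}_{\alpha,q}$ or $v\in\mathfrak{M}_{\alpha,q}$ (both weights cannot exceed $\alpha n$).

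Next I would form the multilinear-in-$(k-2)$-blocks object. Introduce $k-2$ disjoint blocks of variables, say $\mathbf{X}^{(2)},\dots,\mathbf{X}^{(k-1)}$, and consider the polynomial
\[
\Phi \;=\; \prod_{j=2}^{k-1} P\big(j a - (j-1) b\big),
\]
viewed, via the above expansion, as a polynomial in the "$a$-variables" and the "$b$-variables". The hypothesis says $\Phi$ vanishes whenever we plug in $a\neq b$ from $A$ (here the condition $ra\neq rb$ for $1\le r\le k-1$ guarantees all the relevant arguments are genuinely distinct points, so no term is spuriously zeroed), while the diagonal $a=b$ gives $P(a)^{k-2}\neq 0$ by our contradiction hypothesis together with the fact that $ja-(j-1)a=a$. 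So, regarding the $|A|\times|A|$ matrix $M_{a,b}=\Phi(a,b)$, it is diagonal with nonzero diagonal, hence has rank exactly $|A|$.

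On the other hand, expanding the product and distributing the "$u\in\mathfrak{M}_{\alpha,q}$ or $v\in\mathfrak{M}_{\alpha,q}$" dichotomy across the $k-2$ factors, $M$ is a sum of at most $2^{k-2}$ pieces, in each of which at least one of the two arguments (say the $a$-side in some subset $S$ of the factors, the $b$-side in the complement) is forced to lie in $\mathfrak{M}_{\alpha,q}$; collecting the monomials in the forced variables, each such piece factors through a space of dimension at most $|\mathfrak{M}_{\alpha,q}|$ on one side, so its rank is at most $|\mathfrak{M}_{\alpha,q}|$. Therefore $|A|=\operatorname{rank} M\le 2^{k-2}\,|\mathfrak{M}_{\alpha,q}|$, contradicting the hypothesis $|A|>2^{k-2}\,|\mathfrak{M}_{\alpha,q}|$. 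Hence some $c\in A$ has $P(c)=0$.

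The main obstacle I anticipate is the bookkeeping in the expansion step: one must check carefully that substituting $Y_i^{ja_i-(j-1)b_i}$ and writing $Y_i=X_i+1$, then reducing $X_i^q=0$, really does produce a decomposition into $a$-monomials times $b$-monomials with the claimed additivity (actually subadditivity) of the weight $\lambda\mapsto\sum_i\lambda_i/(q-1)$ — the reduction modulo $q$ in the exponent and the truncation at degree $q$ both need to be shown not to increase the weight. Once this "weight does not increase under the natural reduction" fact is nailed down, the dichotomy (one side in $\mathfrak{M}_{\alpha,q}$) and the rank count are routine, and the factor $2^{k-2}$ is exactly the number of subsets $S\subseteq\{2,\dots,k-1\}$ recording which factor contributes its $a$-side.
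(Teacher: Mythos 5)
Your proposal follows the same CLP/Ellenberg--Gijswijt slice-rank strategy as the paper: regard $\Phi(a,b)=\prod_{j=2}^{k-1}P\big(ja-(j-1)b\big)$ as a matrix on $A\times A$, observe that it is diagonal with nonzero diagonal under the contradiction hypothesis (the condition $ra\neq rb$ guarantees the off-diagonal arguments are genuine), split the product into $2^{k-2}$ pieces according to which side ($a$ or $b$) carries the low-weight factor, bound each piece's rank by $|\mathfrak{M}_{\alpha,q}|$, and conclude $|A|\le 2^{k-2}|\mathfrak{M}_{\alpha,q}|$. This is exactly the paper's argument (the paper writes it out for $k=4$ and asserts the general case).

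One place where your sketch is thinner than the paper's: the rank bound for each of the $2^{k-2}$ pieces. You say each piece ``factors through a space of dimension at most $|\mathfrak{M}_{\alpha,q}|$ on one side,'' but if the forced side involves $|S|\ge 2$ of the $k-2$ factors, the naive factorization gives a space of dimension $|\mathfrak{M}_{\alpha,q}|^{|S|}$, not $|\mathfrak{M}_{\alpha,q}|$. The paper instead notes that for each piece there is at least one index $j_0$ in the forced set $S$, and the corresponding row (or column) of the piece can be nonzero only when $j_0 a_i$ lies in $\mathfrak{M}_{\alpha,q}$; since $a\mapsto j_0 a$ is injective on $A$ by the hypothesis $ra\neq rb$, this gives at most $|\mathfrak{M}_{\alpha,q}|$ nonzero rows, hence rank at most $|\mathfrak{M}_{\alpha,q}|$. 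You would need to incorporate this support-counting step to recover the stated $2^{k-2}|\mathfrak{M}_{\alpha,q}|$ bound for $k\ge 5$. Otherwise your proposal matches the paper's route, and you correctly flag the weight-bookkeeping under $X_i^q=0$ as the delicate point.
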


\begin{proof}

For brevity, we only prove the lemma for $k=4$, and the method also works in the general case.

For $0<\alpha<1/2$,
put
$
m_{\alpha}=\big\{\mathbf{X}^{\lambda}:\lambda\in \mathfrak{M}_{\alpha,q}\big\},
$
so we can write
$$
P(\mathbf{X})=\mathop{\sum\sum}_{fg\in m_{2\alpha}}c_{f,g}f(\mathbf{X})g(\mathbf{X}).
$$
In each term of the summand, at least one of $f$ and $g$ is in $m_{\alpha}$.
Hence
$$
P(\mathbf{X})=\sum_{f\in m_{\alpha}}f(\mathbf{X})F_f(\mathbf{X})+\sum_{g\in m_{\alpha}}g(\mathbf{X})G_g(\mathbf{X}).
$$
We thus have
\begin{align*}
P(\mathbf{X})P(\mathbf{Y})&=\mathop{\sum\sum}_{f,f_1\in m_{\alpha}}f(\mathbf{X})f_1(\mathbf{Y})F_f(\mathbf{X})F_{f_1}(\mathbf{Y})
\\
&\ \ \ \ \ \ +\mathop{\sum\sum}_{g,g_1\in m_{\alpha}}g(\mathbf{X})g_1(\mathbf{Y})G_g(\mathbf{X})G_{g_1}(\mathbf{Y})
\\
&\ \ \ \ \ \ +\mathop{\sum\sum}_{f,g_1\in m_{\alpha}}f(\mathbf{X})g_1(\mathbf{Y})F_f(\mathbf{X})G_{g_1}(\mathbf{Y})
\\
&\ \ \ \ \ \ +\mathop{\sum\sum}_{f_1,g\in m_{\alpha}}f_1(\mathbf{X})g(\mathbf{Y})F_{f_1}(\mathbf{X})G_{g}(\mathbf{Y})
\end{align*}
for some families of polynomials $F,\,G$ indexed by $m_{\alpha}$.

Write
$A=\{a_1,a_2,\ldots,a_t\}.$
Now let $B$ be the $t\times t$ matrix whose $i,j$ entry is $P(2a_i-a_j)P(3a_i-2a_j)$. Then
\begin{align*}
B_{ij}&=\mathop{\sum\sum}_{f,f_1\in m_{\alpha}}f(2a_i)f_1(3a_i)F_f(-a_j)F_{f_1}(-2a_j)\\
&\ \ \ \ \ \ +\mathop{\sum\sum}_{g,g_1\in m_{\alpha}}G_g(2a_i)G_{g_1}(3a_i)g(-a_j)g_1(-2a_j)
\\
&\ \ \ \ \ \ +\mathop{\sum\sum}_{f,g_1\in m_{\alpha}}f(2a_i)G_{g_1}(3a_i)F_f(-a_j)g_1(-2a_j)
\\
&\ \ \ \ \ \ +\mathop{\sum\sum}_{f_1,g\in m_{\alpha}}G_g(2a_i)f_1(3a_i)g(-a_j)F_{f_1}(-2a_j)\\
&=B_{ij}^{(1)}+B_{ij}^{(2)}+B_{ij}^{(3)}+B_{ij}^{(4)},
\end{align*}
say.
Hence $(B_{ij}^{(s)})$ is a sum of at most $|\mathfrak{M}_{\alpha,q}|^2$ matrices for each $s$.
One may see that each matrix in $(B_{ij}^{(1)})$ has the form
$$
\begin{pmatrix}
f(2a_1)f_1(3a_1)\\f(2a_2)f_1(3a_2)\\ \cdots \\f(2a_t)f_1(3a_t)
\end{pmatrix}
(F_f(-a_1)F_{f_1}(-2a_1),\,\,F_f(-a_2)F_{f_1}(-2a_2),\cdots,F_f(-a_t)F_{f_1}(-2a_t))
$$
and of rank $1$ or $0$; the rank is 0 unless there exists some $a_i$ such that $f(2a_i)f_1(3a_i)=1$, and the number of such $a_i$ is at most $|\mathfrak{M}_{\alpha,q}|$. This yields the rank of $(B_{ij}^{(1)})$ is at most $|\mathfrak{M}_{\alpha,q}|$. Similarly, one can show that the rank of $(B_{ij}^{(2)})$ is also at most $|\mathfrak{M}_{\alpha,q}|$.

Regarding $(B_{ij}^{(3)})$, each of the $|\mathfrak{M}_{\alpha,q}|^2$ matrices has the form
$$
\begin{pmatrix}
f(2a_1)G_{g_1}(3a_1)\\f(2a_2)G_{g_1}(3a_2)\\ \cdots \\f(2a_t)G_{g_1}(3a_t)
\end{pmatrix}
(F_f(-a_1)g_1(-2a_1),\,\,F_f(-a_2)g_1(-2a_2),\cdots,F_f(-a_t)g_1(-2a_t))
$$
and of rank $1$ or $0$; the rank is 0 unless there exists some $a_i$ and $a_j$ such that $f(2a_i)=1$ and $g_1(-2a_j)=1$, then this matrix has only one non-zero element. The number of such $a_i$ is at most $|\mathfrak{M}_{\alpha,q}|$, hence the row rank of $(B_{ij}^{(3)})$ is also at most $|\mathfrak{M}_{\alpha,q}|$, which also applies similarly to $(B_{ij}^{(4)})$. Thus the rank of $B$ is at most $4|\mathfrak{M}_{\alpha,q}|$.

On the other hand, by the hypothesis on $P$, $B$ must be a diagonal matrix. This completes the proof.
\end{proof}

\begin{lemma}\label{lm:keylemma}
Let $q\geqslant k\geqslant 3$,
$A$ a subset of $\mathbb{Z}_{q}^n$ which
doesn't contain $k$-term arithmetic progressions. Then we have
\begin{align}\label{eq:keylemma}
|A|\leqslant (2^{k-2}+1)
d^n
|\mathfrak{M}_{1/3,q/d}|,
\end{align}
where $d=(L_k,q).$
\end{lemma}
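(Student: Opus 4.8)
The plan is to deduce \eqref{eq:keylemma} from Lemma~\ref{lm:basiclemma} by taking the auxiliary polynomial to be (essentially) the indicator of a suitably truncated part of $A$. Since $q=p^{\alpha}\geqslant k$, there are two cases. Either $p\geqslant k$, and then $d=(L_k,q)=1$; or $p\leqslant k-1$, and then $d=(L_k,q)$ is a prime power with $2\leqslant d\leqslant k-1$ (it divides $L_k$, whose largest prime-power divisor is at most $k-1$) while $q/d$ is a power of $p$ exceeding $1$, so $q/d\geqslant 2$.

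\emph{The case $d=1$.} Here every $r\in\{1,\dots,k-1\}$ is invertible modulo $q$, so $ra\neq rb$ whenever $a\neq b$, and Lemma~\ref{lm:basiclemma} applies to any subset of $A$. Put $A_1=A\cap\mathfrak{M}_{2/3,q}$ and $P=\sum_{c\in A_1}\mathbf{X}^c\in V_{\mathfrak{M}_{2/3,q}}$. For distinct $a,b\in A_1$ the tuple $b,a,2a-b,\dots,(k-1)a-(k-2)b$ is a genuine $k$-term progression (its common difference $a-b$ is nonzero), so as $A$ is progression-free at least one of its last $k-2$ entries lies outside $A\supseteq A_1=\operatorname{supp}P$; hence $P(2a-b)P(3a-2b)\cdots P((k-1)a-(k-2)b)=0$. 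Since $P(c)=1\neq 0$ for every $c\in A_1$, Lemma~\ref{lm:basiclemma} with $\alpha=1/3$ forces $|A_1|\leqslant 2^{k-2}|\mathfrak{M}_{1/3,q}|$. Combined with $|A\setminus A_1|\leqslant|\overline{\mathfrak{M}}_{2/3,q}|=|\mathfrak{M}_{1/3,q}|$ this gives $|A|\leqslant(2^{k-2}+1)|\mathfrak{M}_{1/3,q}|$, which is \eqref{eq:keylemma} as $q/d=q$.

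\emph{The case $d>1$.} Now $r\mapsto ra$ may fail to be injective on $A$, so one cannot invoke Lemma~\ref{lm:basiclemma} directly; instead I would rerun its proof with the $X_i$ replaced by $W_i:=X_i^{d}$. Because $p\mid d$ we have $W_i=Y_i^{d}-1$ and $W_i^{q/d}=X_i^{q}=0$, so the $W_i$ behave as generators of order $q/d$. Writing each exponent vector $\lambda\in[0,q)^n$ uniquely as $\lambda=d\nu+\mu$ with $\nu\in[0,q/d)^n$ and $\mu\in[0,d)^n$, so that $\mathbf{X}^{\lambda}=\mathbf{W}^{\nu}\mathbf{X}^{\mu}$, set $\mathfrak{N}_{\beta}=\{\,d\nu+\mu:\nu\in\mathfrak{M}_{\beta,q/d},\ \mu\in[0,d)^n\,\}$, so that $|\mathfrak{N}_{\beta}|=d^{n}|\mathfrak{M}_{\beta,q/d}|$. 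Let $A_1=A\cap\mathfrak{N}_{2/3}$ and $P=\sum_{c\in A_1}\mathbf{X}^c\in V_{\mathfrak{N}_{2/3}}$; then $|A\setminus A_1|\leqslant d^{n}|\overline{\mathfrak{M}}_{2/3,q/d}|=d^{n}|\mathfrak{M}_{1/3,q/d}|$, so it suffices to prove $|A_1|\leqslant 2^{k-2}d^{n}|\mathfrak{M}_{1/3,q/d}|$. To this end, enumerate $A_1=\{a_1,\dots,a_t\}$ and form the $t\times t$ matrix $B$ with $B_{ij}=P(2a_i-a_j)P(3a_i-2a_j)\cdots P((k-1)a_i-(k-2)a_j)$. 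Exactly as in the proof of Lemma~\ref{lm:basiclemma}, $B$ is diagonal with $B_{ii}=P(a_i)^{k-2}=1$, so $\operatorname{rank}B=t=|A_1|$; and, using that every monomial of $V_{\mathfrak{N}_{2/3}}$ factors as (a monomial of $V_{\mathfrak{N}_{1/3}}$)$\,\cdot\,$(a monomial in the $W_i$) --- split $\nu\in\mathfrak{M}_{2/3,q/d}$ as $\nu'+\nu''$ with $\nu'\in\mathfrak{M}_{1/3,q/d}$ and keep the factor $\mathbf{X}^{\mu}$ with $\nu'$ --- the $(k-2)$-fold analogue of the expansion of $P(\mathbf{X})P(\mathbf{Y})$ used in that proof writes $B$ as a sum of $2^{k-2}$ matrices, one for each way of deciding, for each $r\in\{2,\dots,k-1\}$, on which side the distinguished $V_{\mathfrak{N}_{1/3}}$-monomial sits.

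The rank bound for each of these $2^{k-2}$ matrices is obtained by inspecting the factor with $r=d$, which is admissible precisely because $2\leqslant d\leqslant k-1$. If that factor sits on the ``$i$''-side then, since $da_i\equiv d\,\pi(a_i)\pmod q$ for the reduction $\pi\colon\mathbb{Z}_q^n\to\mathbb{Z}_{q/d}^n$, its distinguished monomial $\mathbf{W}^{\nu}\mathbf{X}^{\mu}$ must satisfy $\mu=0$ and $\nu=\pi(a_i)\in\mathfrak{M}_{1/3,q/d}$, so the matrix is supported on the $\leqslant d^{n}|\mathfrak{M}_{1/3,q/d}|$ columns $i$ with $\pi(a_i)\in\mathfrak{M}_{1/3,q/d}$ (the fibres of $\pi$ meeting $A_1$ having size $\leqslant d^{n}$); if it sits on the ``$j$''-side then, as $\gcd(d-1,q)=1$, its distinguished monomial pins down $a_j=-(d-1)^{-1}(d\nu+\mu)$, so the matrix is supported on the $\leqslant|\mathfrak{N}_{1/3}|=d^{n}|\mathfrak{M}_{1/3,q/d}|$ rows $j$ with $-(d-1)a_j\in\mathfrak{N}_{1/3}$. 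Either way every summand has rank $\leqslant d^{n}|\mathfrak{M}_{1/3,q/d}|$, whence $|A_1|=\operatorname{rank}B\leqslant 2^{k-2}d^{n}|\mathfrak{M}_{1/3,q/d}|$ and \eqref{eq:keylemma} follows. The delicate point is exactly this last step: one must verify that the bookkeeping of Lemma~\ref{lm:basiclemma} --- in particular the precise sense in which the distinguished factor ``evaluates'' a single $a_i$ or $a_j$ --- survives the substitution $X_i\mapsto W_i$ together with the free low-degree factors $\mathbf{X}^{\mu}$, $\mu\in[0,d)^n$, and that it is the factor $r=d$ that absorbs the possible failure of $ra\neq rb$; the remaining ingredients (the splitting $\mathfrak{M}_{2/3,m}\subseteq\mathfrak{M}_{1/3,m}+\mathfrak{M}_{1/3,m}$, the identity $|\overline{\mathfrak{M}}_{\beta,m}|=|\mathfrak{M}_{1-\beta,m}|$, and $|\mathfrak{N}_{\beta}|=d^{n}|\mathfrak{M}_{\beta,q/d}|$) are routine.
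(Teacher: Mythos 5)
Your proof splits into the two cases $d=1$ and $d>1$, and these deserve separate comments.

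For $d=1$ your argument is essentially the same as the paper's: you take $P$ to be the indicator of $A\cap\mathfrak{M}_{2/3,q}$ and feed it into Lemma~\ref{lm:basiclemma}, while the paper first translates by some $r_1\in A$ and then restricts to $\mathfrak{M}_{2/3,q}$; the bookkeeping differs slightly but the mechanism is identical.

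For $d>1$ you take a genuinely different route, and this is where the concern lies. The paper's key move is a \emph{coset decomposition}: with $F=d\,\mathbb{Z}_q^n\cong\mathbb{Z}_{q/d}^n$ it writes $\mathbb{Z}_q^n$ as a union of $d^n$ cosets $r_j+F$, observes that each $(A\cap(r_j+F))-r_j$ lies in $F$ and so, after dividing by $d$, becomes a $k$-AP-free subset $B_j\subseteq\mathbb{Z}_{q/d}^n$ (represented by $[0,q/d)^n$) satisfying the injectivity hypothesis of Lemma~\ref{lm:basiclemma}. Lemma~\ref{lm:basiclemma} is then applied as a black box with $q$ replaced by $q/d$, and summing the resulting bound over the $d^n$ cosets produces exactly the factor $d^n$ in \eqref{eq:keylemma}. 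You instead keep $A$ whole, change the monomial basis ($W_i=X_i^d$ together with the low factors $\mathbf{X}^{\mu}$, $\mu\in[0,d)^n$), and try to \emph{re-run} the rank argument of Lemma~\ref{lm:basiclemma} inline, with the $r=d$ factor in the product $\prod_{r=2}^{k-1}P(ra_i-(r-1)a_j)$ absorbing the non-injectivity. What the paper's route buys is modularity: once Lemma~\ref{lm:basiclemma} is established, the coset trick delivers \eqref{eq:keylemma} with no further rank computations; it also makes the factor $d^n$ appear for a transparent reason (number of cosets). Your route, if it works, avoids the cosets and treats $A$ in one go, which is conceptually appealing.

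However, there is a real gap in the $d>1$ case, and you half-acknowledge it yourself. The rank bound in Lemma~\ref{lm:basiclemma} rests on a specific bilinear factorization of $P(ra_i-(r-1)a_j)$ into an ``$a_i$-part'' and an ``$a_j$-part'' indexed by low-degree monomials of $\mathfrak{M}_{\alpha,q}$; this is a property of the expansion in the $X_i$-basis. You assert, without proof, that the same bookkeeping survives the substitution $X_i\mapsto W_i=X_i^d$ together with the auxiliary factors $\mathbf{X}^{\mu}$, and that for each of the $2^{k-2}$ summands the $r=d$ factor's distinguished monomial pins down $\pi(a_i)$ (on the $i$-side) or $a_j$ (on the $j$-side) in the way you describe. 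This is precisely the content that would need to be re-derived, and it is not automatic: the evaluation identity underlying Lemma~\ref{lm:basiclemma}'s proof is not obviously compatible with the mixed monomials $\mathbf{W}^{\nu}\mathbf{X}^{\mu}$, and the counting of nonzero rows/columns must be done afresh for the non-injective multiplications $a\mapsto ra$ with $p\mid r$. As written, the $d>1$ case is a plausible sketch rather than a proof. If you want to keep the coset-free approach you must supply that verification; otherwise it is simpler to follow the paper and reduce to Lemma~\ref{lm:basiclemma} on $\mathbb{Z}_{q/d}^n$ via the coset decomposition.
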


\begin{proof}

Suppose $t\geq 3$ and $b_1,\,b_2,\ldots,b_t$ is a non-trivial $t$-term arithmetic progression, then
$$
b_t-b_{t-1}=b_{t-1}-b_{t-2}=\cdots= b_2-b_1\neq 0,
$$
and
\begin{equation}
b_j=b_1+(j-1)(b_2-b_1)=(j-1)b_2-(j-2)b_1,\,\,\,\,\,3\leq j\leq t.
\end{equation}

Hence each non-trivial $t$-term arithmetic progression $b_1,\,b_2,\ldots,b_t$ is determined by $b_2$ and $b_1$ only, taking the order into account.

Let $F$ be the kernel of the homomorphism of $\mathbb{Z}_q^n$ defined by
$g\mapsto \frac{q}{(L_k,q)}g\,(g\in \mathbb{Z}_q^n)$,
then
$$
F=(L_k,q)\mathbb{Z}_q^n\cong \mathbb{Z}_{\frac{q}{(L_k,q)}}^n,
$$
and
$$
\mathbb{Z}_q^n/F\cong \mathbb{Z}_{(L_k,q)}^n.
$$

Let $\mathfrak{R}$ be the set of all $F$-cosets, we write
$\mathfrak{R}=\{R_1,R_2,...,R_{(L_k,q)^n}\}$.
For $1\leqslant j\leqslant (L_k,q)^n$, let $A_j:=A\bigcap R_j$, we choose one element $r_j\in A_j$, and then we have
$$R_j=r_j+F.$$

Without loss of generality, we consider $A_1$.
First, we have
$$
(A_1-r_1)\bigcap R_1\subseteq F=(L_k,q)\mathbb{Z}_q^n\cong \mathbb{Z}_{\frac{q}{(L_k,q)}}^n.
$$

Therefore $A_1$ doesn't contain any $k$-term arithmetic progression.
Define $B$ by
$$
A_1-r_1=(L_k,q)B,\,\,\,
B\subseteq \mathbb{Z}_q^n.
$$
Hence $B$ doesn't contain any $k$-term arithmetic progression and satisfies $ra\neq rb$ for $a\neq b\in B$ with $1\leq r\leq k-1$.

We shall prove that $|B|\leqslant (2^{k-2}+1)|\mathfrak{M}_{1/3,q/(L_k,q)}|$,
which would yield
\begin{equation}
|A|\leqslant (L_k,q)^n|B|\leqslant (2^{k-2}+1)(L_k,q)^n|\mathfrak{M}_{1/3,q/(L_k,q)}|.
\end{equation}

Assuming, contrary to what we want to prove, that $|B|> (2^{k-2}+1)|\mathfrak{M}_{1/3,q/(L_k,q)}|$.
Let $W$ denote the linear space spanned by $\{\mathbf{X}^{\lambda}:\lambda\in B\bigcap{\mathfrak{M}}_{2/3,\,q/(L_k,q)}\}$,
then
\begin{align*}
\dim W&\geqslant |{\mathfrak{M}}_{2/3,\,q/(L_k,q)}|+|B|-(q/(L_k,q))^n\\
&=|B|-\{(q/(L_k,q))^n-|{\mathfrak{M}}_{2/3,\,q/(L_k,q)}|\}\\
&=|B|-|{\overline{\mathfrak{M}}}_{2/3,\,q/(L_k,q)}|\\
&=|B|-|\mathfrak{M}_{1/3,q/(L_k,q)}|\\
&>2^{k-2}|\mathfrak{M}_{1/3,q/(L_k,q)}|.
\end{align*}
We can choose some $b_i\in B$ such that $$P:=\mathbf{X}^{b_1}
+\mathbf{X}^{b_2}+\cdots+\mathbf{X}^{b_t}
\in W\subseteq V_{{\mathfrak{M}}_{2/3,\,q/(L_k,q)}}.$$

Let
$$
B_1:=\{b_1,b_2,\ldots,b_t\}\subseteq B.
$$

By assumption we have $P(2a-b)P(3a-2b)\cdots P((k-1)a-(k-2)b)=0$ for every pair $a,\,b$ of distinct elements in $B_1$. Taking $\alpha=1/3$ in Lemma \ref{lm:basiclemma}, we have $P(b_i)=0$ for some $b_i\in B_1$, this is a contradiction. Hence
$$
|B|\leqslant (2^{k-2}+1)|\mathfrak{M}_{1/3,q/(L_k,q)}|,
$$
and the lemma follows.
\end{proof}

\begin{lemma}\label{lm:M-bound}
We have
$$
|\mathfrak{M}_{1/3,\,q}|\leqslant q^n\mathfrak{A}(q)^n.
$$
\end{lemma}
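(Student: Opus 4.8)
The plan is a standard exponential-weight (Markov/Chernoff-type) argument. Fix an arbitrary $x\in(0,1)$. The key observation is that for an integer point $\lambda=(\lambda_1,\ldots,\lambda_n)\in[0,q)^n$, the defining inequality $\sum_i\lambda_i\le (q-1)n/3$ of $\mathfrak{M}_{1/3,q}$ forces $\sum_i\lambda_i-(q-1)n/3\le 0$, so that $x^{\sum_i\lambda_i-(q-1)n/3}\ge 1$ (here one must be careful that $0<x<1$ reverses the usual direction). Hence the indicator of membership in $\mathfrak{M}_{1/3,q}$ is pointwise bounded above by $x^{\sum_i\lambda_i-(q-1)n/3}$, and therefore
\[
|\mathfrak{M}_{1/3,q}|\;\le\;\sum_{\lambda\in[0,q)^n\cap\mathbb{Z}^n} x^{\sum_{i}\lambda_i-(q-1)n/3}.
\]

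Next I would factor the right-hand side over the $n$ coordinates. Since each $\lambda_i$ ranges independently over $\{0,1,\ldots,q-1\}$, the sum splits as
\[
x^{-(q-1)n/3}\prod_{i=1}^{n}\sum_{j=0}^{q-1}x^{j}\;=\;x^{-(q-1)n/3}\Big(\frac{1-x^{q}}{1-x}\Big)^{n}\;=\;\Big(\frac{1-x^{q}}{(1-x)\,x^{(q-1)/3}}\Big)^{n}.
\]
This bound is valid for every $x\in(0,1)$, so I can optimize: taking the infimum over $x\in(0,1)$ gives
\[
|\mathfrak{M}_{1/3,q}|\;\le\;\Big(\min_{x\in(0,1)}\frac{1-x^{q}}{(1-x)\,x^{(q-1)/3}}\Big)^{n}\;=\;\big(q\,\mathfrak{A}(q)\big)^{n}\;=\;q^{n}\mathfrak{A}(q)^{n},
\]
where the middle equality is just the definition \eqref{eq:A(m)} of $\mathfrak{A}(q)$ with the factor $q$ pulled out. (One should check the minimum is attained, or simply work with the infimum throughout; since the function is continuous on $(0,1)$ and tends to $+\infty$ at both endpoints, the minimum is indeed attained, but this point is not needed for the stated inequality.)

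There is no real obstacle here: the only thing to get right is the orientation of the inequality $x^{t}\ge 1$ for $t\le 0$, $x\in(0,1)$, and the clean geometric-series evaluation $\sum_{j=0}^{q-1}x^{j}=(1-x^{q})/(1-x)$. The whole argument is a one-line "smoothing by a multiplicative weight" estimate, exactly the standard device used to bound the size of such truncated simplices of lattice points.
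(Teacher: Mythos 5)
Your proof is correct and is essentially the same argument as the paper's: the paper phrases the bound probabilistically (introducing uniform random variables $\xi_i=\lambda_i/(q-1)$ and invoking the Chernoff bound with a substitution $y=x^{1/(q-1)}$), while you write out the equivalent direct weighted-counting estimate and geometric-series factorization, arriving at the same optimization over $x\in(0,1)$.
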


\begin{proof}
Write $\xi_{i}=\lambda_{i}/(q-1)$, and we regard $\xi_{i}$ as random variables uniformly distributed in the set
$$
\Big\{0,\frac{1}{q-1},\,\frac{2}{q-1},\ldots,\frac{q-1}{q-1}\Big\}.
$$
Then
\begin{equation}
\frac{|\mathfrak{M}_{1/3,\,q}|}{q^n}
=\mathbf{Pr}\Big(\sum_{i=1}^n\xi_{i}\leq n/3 \Big)=\mathbf{Pr}\Big(x^{\sum_{i=1}^n\xi_{i}}\geq x^{n/3}\Big)\end{equation}
for any $x\in (0,1)$.
By Chernoff bound, we have
\begin{align*}
\frac{|\mathfrak{M}_{1/3,\,q}|}{q^n}&\leq
x^{-n/3}\mathbf{E}\Big[x^{\sum_{i=1}^n\xi_{i}}\Big]
=\Big(\prod_{i=1}^nx^{-1/3}\mathbf{E}[x^{\xi_{i}}]\Big).
\end{align*}
On the other hand, from the uniform distribution of $\xi_i$ ($1\leqslant i\leqslant n),$ it follows that
$$
x^{-1/3}\mathbf{E}[x^{\xi_i}]=\frac{1}{qx^{1/3}}\sum_{j=0}^{q-1}x^{\frac{j}{q-1}}
=\frac{1-y^q}{q(1-y)y^{\frac{q-1}{3}}}
$$
with $y=x^{\frac{1}{q-1}}$.

Hence we may conclude that
$$
\frac{|\mathfrak{M}_{1/3,\,q}|}{q^n}\leq \Big(\frac{1-y^q}{q(1-y)y^{\frac{q-1}{3}}}\Big)^n.
$$
The lemma then follows from the arbitrariness of $x$ (and thus of $y$).
\end{proof}

\section{\bf Proof of Theorem \ref{mt}}
\setcounter{lemma}{0}\setcounter{theorem}{0}\setcounter{corollary}{0}
\setcounter{equation}{0}

Now we give the proof of Theorem \ref{mt}.
\begin{proof}

For $q=p^\alpha$, let $A$ be a subset of $\mathbb{Z}_q^n$ free of
$k$-term arithmetic progressions.
By Lemmas \ref{lm:keylemma} and \ref{lm:M-bound}, for $q> (L_k,q)$ we have
\begin{align*}
|A|&\leqslant (2^{k-2}+1)\cdot (L_k,q)^n|\mathfrak{M}_{1/3,\,\frac{q}{(L_k,q)}}|\leqslant
(2^{k-2}+1)\cdot q^n\cdot \mathfrak{A}\Big(\frac{q}{(L_k,q)}\Big)^n.
\end{align*}
For each positive integer $v$, using the tensor trick, the set $A\times A\times\cdots\times A\subseteq \mathbb{Z}_q^{vn}$ is $k$-term progression-free, and therefore
$$
|A|^v\leqslant
(2^{k-2}+1)\cdot q^{vn}\cdot \mathfrak{A}\Big(\frac{q}{(L_k,q)}\Big)^{vn}.
$$
This implies Theorem \ref{mt} by letting $v$ approach to infinity.
\end{proof}

\section{\bf Proof of Corollary \ref{coro}}
\setcounter{lemma}{0}\setcounter{theorem}{0}\setcounter{corollary}{0}
\setcounter{equation}{0}

Now we give the proof of Corollary \ref{coro}. Here $q$ is not necessary to be a prime power and we thus suppose $q=\prod_{i=1}^lp_i^{\alpha_i}$ as the standard factorization of $q$.

\begin{proof}

(1) For $q\rightarrow +\infty$, we must have $$M:=\max_{1\leq i\leq l}p_i^{\alpha_i}\rightarrow +\infty.$$

For large $M$, taking $x=1-\frac{\alpha}{M}$ with $\alpha=2.148$, we then have
$$
\lim_{M\rightarrow +\infty}\frac{1-x^M}{M(1-x)x^{\frac{M-1}{3}}}=
\lim_{M\rightarrow +\infty}\frac{1-(1-\frac{\alpha}{M})^M}{\alpha(1-\frac{\alpha}{M})^{\frac{M-1}{3}}}
=\frac{e^{\alpha/3}-e^{-2\alpha/3}}{\alpha}
<0.8415,
$$ which yields $\mathfrak{A}(M)\leq0.8415$ for all sufficiently large $M$. It follows  that
$$
r_k(\mathbb{Z}_q^n)\leq (q/M)^nr_k(\mathbb{Z}_M^n) \leq (q/M)^n M^n\mathfrak{A}(M)^n\leq (0.8415q)^n.
$$

(2) For $x=1-\frac{\beta}{N}$, $\beta=1.6$, we have
$$
\frac{1-x^N}{N(1-x)x^{\frac{N-1}{3}}}=
\frac{1-(1-\frac{\beta}{N})^N}{\beta(1-\frac{\beta}{N})^{\frac{N-1}{3}}}=
\frac{(1-\frac{\beta}{N})^{-\frac{N-1}{3}}-(1-\frac{\beta}{N})^{\frac{2N+1}{3}}}{\beta}.$$
When $N\geq 13$, the above quantity is at most
$$
\frac{e^{\frac{N-1}{3}\frac{\beta/N}{1-\beta/N}}-e^{-\frac{2N+1}{3}\frac{\beta/N}{1-\beta/N}}}{\beta}
\leq \frac{e^{\frac{4\beta}{13-\beta}}-e^{-\frac{9\beta}{13-\beta}}}{\beta}
<0.92.
$$
On the other hand, for all prime powers $N<13$, we have the following list of explicit bounds for $\mathfrak{A}(N)$:
$$
\mathfrak{A}(2)<0.94495,\,\,\mathfrak{A}(3)<0.9184,\,\,\mathfrak{A}(4)<0.9027,\,\,\mathfrak{A}(5)<0.8924,\,\,
$$
$$
\mathfrak{A}(7)<0.8795,\,\,\mathfrak{A}(8)<0.8753,\,\,\mathfrak{A}(9)<0.8718,\,\,\mathfrak{A}(11)<0.8667.
$$
Hence we may state, for each prime power $N\geqslant2$, that
\begin{equation}\label{eq:boundA(N)}
\mathfrak{A}(N)<0.945.
\end{equation}

For $q>(L_k,q)$, there exists some prime power $p^\alpha\|\frac{q}{(L_k,q)}$.
We may apply \eqref{eq:boundA(N)} with $N=p^\alpha,$ getting
$$
r_k(\mathbb{Z}_q^n)\leq (q/N)^nr_k(\mathbb{Z}_{N}^n) \leq (q/N)^n N^n\mathfrak{A}(N)^n\leq (0.945q)^n.
$$
This establishes Corollary \ref{coro}.

\end{proof}

\bigskip


\begin{thebibliography}{9}

\bibitem{BK12} M. Bateman and N. H. Katz, New bounds on cap sets, {\it J. Amer. Math. Soc}. {\bf 25} (2012), 585-613.

\bibitem{Blo16} T. F. Bloom, A quantitative improvement for Roth's theorem on arithmetic progressions, {\it J. Lond. Math. Soc}. {\bf 93} (2016), 643-663.

\bibitem{Bou99} J. Bourgain, On triples in arithmetic progression, {\it Geom. Funct. Anal}. {\bf 9} (1999), 968-984.

\bibitem{BB82} T. C. Brown and J. P. Buhler, A density version of a geometric Ramsey theorem, {\it J. Combin. Theory, Ser. A} {\bf 32} (1982), 20-34.

\bibitem{CLP17} E. Croot, V. F. Lev, and P. P. Pach, Progression-free sets in $\mathbb{Z}_4^n$ are exponentially small, {\it Ann. of Math}. {\bf 185} (2017), 331-337.

\bibitem{EG17} J. S. Ellenberg and D. Gijswijt, On large subsets of $\mathbb{F}_q^n$ with no three-term arithmetic progression, {\it Ann. of Math}. {\bf 185} (2017), 339-343.

\bibitem{HB87} D. R. Heath-Brown, Integer sets containing no arithmetic progressions, {\it J. London Math. Soc}. {\bf 35} (1987), 385-394.

\bibitem{Mes95} R. Meshulam, On subsets of finite abelian groups with no 3-term arithmetic progressions, {\it J. Combin. Theory Ser. A} {\bf 71} (1995), 168-172.

\bibitem{Rot52} K. Roth, Sur quelques ensembles d'entiers, {\it C. R. Acad. Sci. Paris} {\bf 234} (1952), 388-390.

\bibitem{Rot53} K. Roth, On certain sets of integers, {\it J. London Math. Soc}. {\bf 28} (1953), 104-109.

\bibitem{San09} T. Sanders, Roth's theorem in $\mathbb{Z}_4^n$, {\it Anal. PDE} {bf 2} (2009), 211-234.

\bibitem{San11} T. Sanders, On Roth's theorem on progressions, {\it Ann. of Math}. {\bf 174} (2011), 619-636.

\bibitem{San12} T. Sanders, On certain other sets of integers, {\it J. Anal. Math}. {\bf 116} (2012), 53-82.

\bibitem{Sze90} E. Szemer\'{e}di, Integer sets containing no arithmetic progressions, {\it Acta Math. Hungar}. {\bf 56} (1990), 155-158.



\end{thebibliography}
\end{document}